\documentclass[12pt]{amsart}
\usepackage{amssymb}
\usepackage{amsmath}
\usepackage{mathabx}
\usepackage[bottom]{footmisc}
\numberwithin{equation}{section}
\usepackage{breakcites}
\usepackage{color}
\usepackage{graphicx}
\usepackage{epstopdf}

\newtheorem{thm}{Theorem}
\newtheorem{lemma}[thm]{Lemma}
\newtheorem{cor}[thm]{Corollary}

\theoremstyle{definition}
\newtheorem{definition}[thm]{Definition}

\newcommand{\bigperp}{\mathop{\vcenter{\hbox{\scalebox{1.5}{$\perp$}}}}\displaylimits}

\def\ff{{\mathbb F}}
\def\zz{{\mathbb Z}}
\def\mc{{\mathcal C}}
\def\ca{{\mathcal A}}
\def\zs{\mathsf{Zsig}}

\newcommand\blfootnote[1]{
    \begingroup
    \renewcommand\thefootnote{}\footnote{#1}
    \addtocounter{footnote}{-1}
    \endgroup
}

\begin{document}

\title[Invariable non-generation]{Infinitely many finite simple groups of Lie type are not generated invariably by two elements of prime order}

\begin{abstract}
We show that there are infinitely many $k$ for which $Sp_{2k}(2)$ is not generated invariably by two elements of prime order.
\end{abstract}

\author[R. M. Guralnick]{Robert M. Guralnick}
\address{Department of Mathematics, University of Southern California, 
Los Angeles, CA 90089-2532 USA}
\email{guralnic@usc.edu}

\author[J. Shareshian]{John Shareshian}
\address{Department of Mathematics, Washington University, 
St Louis, MO 63124 USA}
\email{jshareshian@wustl.edu}

\author[R. Woodroofe]{Russ Woodroofe}
\address{Univerza na Primorskem, 
Koper, Slovenia}
\email{russ.woodroofe@famnit.upr.si}

\maketitle

\blfootnote{Work of the first author was partially supported by NSF Grant DMS-1901595 and Simons Foundation
Fellowship 609771. Work
of the third author is supported in part by the Slovenian Research Agency research program P1-0285 and
research projects J1-50000, J1-70047, N1-0160, and J1-3003.}

\section{Introduction}

We say elements $g_1,g_2,\ldots,g_m$ of a group $G$ {\it generate $G$ invariably} if $\langle x_1^{-1}g_1x_1,x_2^{-1}g_2x_2,\ldots,x_m^{-1}g_mx_m \rangle=G$ for all $x_1,x_2,\ldots,x_m \in G$.  It was shown by Guralnick and Malle in \cite{GM} and independently by Kantor, Lubotzky, and Shalev in \cite{KLS} that every finite simple group is generated invariably by two elements.  This strengthens the (somewhat) classical result that every such group is generated by two elements, see \cite{Miller}, \cite{St} and \cite{AG}.  

For ordinary generation, restrictions can be placed on the orders of the generating elements: Every finite simple group is generated by two elements of prime order.  By a result of King in \cite{King}, one can take one of these elements to be an involution.  The analogous result does not hold for invariable generation.  There are infinitely many alternating groups that cannot be generated invariably by two elements of prime order (see \cite{DGHP} and \cite{SW}).  The picture for finite simple groups of Lie type has been less clear.  One can show using the Atlas \cite{atlas} and either bare hands or GAP that $\Omega_8^+(2)$ is not generated invariably by two elements of prime power order.  Similarly, $SL_4(2) \cong A_8$, $U_4(3)$, $Sp_6(2)$, $Sp_8(2)$, $\Omega_7(3)$, and $\Omega^+_8(3)$ are not generated invariably by two elements of prime order.  It turns out that the same holds for $\Omega_7(5)$, $\Omega_7(7)$, $\Omega_7(17)$, although this is harder to show and will appear in forthcoming work.  It is reasonable  to hope based on results in \cite{GM,KLS} that such examples are rare.

This hope is expressed explicitly in Section 6 of \cite{DGHP}, where it is conjectured that all but finitely many finite simple groups of Lie type are generated invariably by two elements of prime order.  Here we show that this conjecture is not true.  In particular, we have the following result, which encompasses the two symplectic examples mentioned above.

\begin{thm} \label{thm.main}
If $p$ is a prime and $a$ is a positive integer, then $Sp_{2k}(2)$ is not generated invariably by two elements of prime order whenever
\begin{enumerate}
\item $k=p^a$,
\item $k=2p^a$,
\item $k=3p^a$,
\item $k=6p^a$, or
\item $k=45$.
\end{enumerate}
\end{thm}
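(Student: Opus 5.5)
The plan is to show that for any two elements $x,y\in G=Sp_{2k}(2)$ of prime order, some conjugates of $x$ and $y$ lie in a common proper subgroup. It suffices to find one maximal subgroup $M$ such that every element of $G$ of prime order has a conjugate in $M$, or failing that, a short list of maximal subgroups covering all pairs. The natural candidates are the orthogonal subgroups $O^{\pm}_{2k}(2)$, the field extension subgroups $Sp_{2k/b}(2^b).b$ for primes $b\mid k$, and the subgroups $Sp_{2m}(2)\wr S_t$ and $GU_k(2).2$, $GL_k(2).2$.

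\textbf{Step 1: elements that lie in both orthogonal subgroups.} Let $x$ have prime order $r$ and write $V=C_V(x)\perp[V,x]$. If $C_V(x)\neq 0$, then $C_V(x)$ is a nondegenerate subspace of positive even dimension. A quadratic form of either Witt type can be put on it and combined with a suitable form on $[V,x]$, so $x$ lies in both $O^+_{2k}(2)$ and $O^-_{2k}(2)$. The same holds for involutions, by the standard fact that every involution of $Sp_{2k}(2)$ lies in orthogonal groups of both types. Thus only fixed-point-free elements of odd prime order $r$ matter.

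\textbf{Step 2: the type forced on a fixed-point-free element.} For such an $x$, let $d$ be the order of $2$ modulo $r$; then $d\mid 2k$. Any $x$-invariant quadratic form polarizing to the symplectic form corresponds to a hermitian or hyperbolic form on the isotypic module, and all such forms are equivalent. Hence the type $\epsilon(x)$ is forced:
\begin{itemize}
\item if $d$ is odd, the module splits into pairs $W\oplus W^*$ and $\epsilon(x)=+$;
\item if $d$ is even, each irreducible summand has minus type and $\epsilon(x)=(-1)^{2k/d}$.
\end{itemize}
Consequently, the only pairs not already handled by $O^{\pm}_{2k}(2)$ are pairs of fixed-point-free elements $x,y$ with $\epsilon(x)=+$ and $\epsilon(y)=-$.

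\textbf{Step 3: the remaining pairs, via the arithmetic of $k$.} This is where the hypothesis on $k$ enters. Take $k=p^a$ with $p$ odd. The divisors $d$ of $2k$ are $1,2,p^i,2p^i$.
\begin{itemize}
\item When $p\mid d$, the element $x$ centralizes an $\ff_{2^p}$-structure compatible with the form, so $x$ lies in $Sp_{2k/p}(2^p).p$.
\item The case $d=2$ forces $r=3$ and $x$ acts as the scalar $\omega$ on an $\ff_4$-structure. Since $p$ is odd, $\omega$ also has a conjugate inside $Sp_{2k/p}(2^p)$.
\end{itemize}
So every fixed-point-free element of odd prime order has a conjugate in the single class of subgroups $Sp_{2k/p}(2^p).p$, which settles that case. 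For $k=2p^a$, $3p^a$ and $6p^a$, the set of divisors $d$ is larger. I would combine the subgroup $Sp_{2k/p}(2^p).p$ with the subgroups $Sp_{2k/2}(4).2$ and $Sp_{2k/3}(8).3$, or with $GU_k(2)$, and check that each surviving $(\epsilon=+,\epsilon=-)$ pair of values $(d_x,d_y)$ shares a common divisor, or a common small field, that gives a shared extension-field subgroup. The case $k=45$ is a finite list of values of $d$ and would be handled the same way by direct inspection.

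\textbf{Main obstacle.} I expect the hardest part to be $p=2$ (so $k=2^a$), and the analogous subcases in (2)--(4). There every $d$ is even, and the bad pair consists of an element with $d=2$ or small $d$ (type $+$) and a Singer-type element with $d=2k$ (type $-$), which lives only in $Sp_2(2^k).k$. The scalar $\omega$ is not in $Sp_k(4)$ itself, only in its semilinear extension or in $GU_k(2)$. My first attempt would be to place both elements in the full normalizer $Sp_{k}(4).2$, or in $O^-_{k}(4).2$ viewed inside $O^-_{2k}(2)$. Failing that, I would use the embedding $GU_1(2^k)\le GU_k(2)$, which is available when the relevant extension degree is odd. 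Whichever works will decide which small-$d$ cases need separate treatment.
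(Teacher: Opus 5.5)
\textbf{There is a genuine gap, in two places.}

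\textbf{1. The claim about involutions in Step~1 is false for $k$ even.} It is not true that every involution of $G$ fixes quadratic forms of both types. Suppose $u$ has Jordan type $(2,\ldots,2)$ and $(v,vu)=0$ for all $v\in V$; such $u$ exist exactly when $k$ is even. For any $u$-invariant $Q$ one gets $Q(v+vu)=(v,vu)=0$. So $[V,u]$ is a totally singular $k$-space, and $Q$ must have $+$ type; this is Lemma~\ref{lem.bothtypes}(3).

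Consequently, besides your pairs of fixed-point-free elements of opposite types, you must also rule out pairs consisting of such an involution and a fixed-point-free element of $-$ type. These pairs occur in cases (2) and (4), and in (1) and (3) with $p=2$; for instance $k=4$ with an element of order $17$.

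The paper devotes Lemma~\ref{lem.invm3} and Corollary~\ref{cor.oddonly} to these pairs:
\begin{itemize}
\item Such involutions have conjugates in $M_3(\ell)$ for every odd prime $\ell\mid k$, and in $M_3(2)$ when $4\mid k$. This uses Lemma~\ref{lem.extensions}(2) and a Galois-conjugacy argument based on the Liebeck--Seitz classification.
\item The leftover case $k\equiv 2\pmod 4$, $d\in\{2,4\}$ is handled by a common invariant nondegenerate $4$-space.
\end{itemize}
Your argument for $k=p^a$ with $p$ odd is fine, because there $k$ is odd and no such involution exists.

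\textbf{2. Step~3 is only a plan in the remaining cases, and the obstacle you name rests on a misconception.}
\begin{itemize}
\item A fixed-point-free element of order $3$ does have a conjugate in $\iota(Sp_k(4))$. Take $x\in Sp_k(2)\le Sp_k(4)$ fixed-point-free of order $3$. By Lemma~\ref{lem.extensions}(1), $\iota(x)$ has characteristic polynomial $(X^2+X+1)^k$, and all such elements of $G$ are conjugate (Lemma~\ref{lem.everym3}).
\item An element with $d=2k$ is not confined to $Sp_2(2^k).k$. Indeed $2^{2k}-1$ divides $|Sp_{2k/\ell}(2^\ell)|$ for every prime $\ell\mid k$.
\end{itemize}

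What is missing is the paper's mechanism, in three parts:
\begin{itemize}
\item[(a)] If $\gcd(d_x,d_y)$ has an odd prime factor $\ell$, or is divisible by $4$, then a single $M_3(\ell)$ (respectively $M_3(2)$) contains Sylow subgroups for both primes, by an order count (Lemma~\ref{lem.c3}).
\item[(b)] If $\zs_2(d)=\{d+1\}$ with $d$ even, or $\zs_2(d)=\{2d+1\}$ with $d$ odd, then the fixed-point-free elements of that order form one $G$-class meeting \emph{every} $M_3(\ell)$.
\item[(c)] For $k=bp^a$ with $b\mid 6$, once (a) fails one of $d_x,d_y$ divides $12$, and $\{2,3,4,6,12\}\subseteq\ca$ (with $6$ having no Zsigmondy prime at all). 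For $k=45$, a finite check using $\{2,3,6,10,18\}\subseteq\ca$ leaves only the pair $\{5,9\}$.
\end{itemize}

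Your type computation in Step~2 is correct and would work as an extra filter. For example, it disposes of $\{5,9\}$ immediately: both have odd $d$, hence type $+$. This is the same point as the paper's invariant totally singular $45$-spaces. But without (b), and without the involution argument above, cases (2)--(5) and case (1) with $p=2$ remain unproved.
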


In forthcoming work, we will present positive results on invariable generation of finite simple groups of Lie type by two elements of restricted orders.  In some reasonable sense, most finite simple groups of Lie type are generated invariably by two elements of prime order.

\section{Proof of Theorem \ref{thm.main}}

Given a positive integer $k$, we set $G=Sp_{2k}(2)$.  Since Theorem \ref{thm.main} makes no claim when $k=1$ and holds by inspection when $k=2$ and $Sp_{2k}(2) \cong S_6$, we assume $k \geq 3$.  We write $V$ for the natural module $\ff_2^{2k}$ for $G$ and $(\cdot,\cdot)$ for the nondegenerate alternating (and therefore symmetric) form on $V$ stabilized by $G$.  Now $G$ acts on the set of nondegenerate quadratic forms $Q$ on $V$ satisfying 
\begin{equation} \label{irthform}
(v,w)=Q(v+w)+Q(v)+Q(w)
\end{equation}
 for all $v,w \in V$.  The action is given by $Q^g(v):=Q(vg^{-1})$ for all quadratic forms $Q$ under consideration, all $g \in G$, and all $v \in V$.  There are two $G$-orbits under this action.  One consists of those $Q$ such that there is some $k$-dimensional $W<V$  that is totally singular with respect to $Q$ (that is, satisfies $Q(w)=0$ for all $w \in W$). The other $G$-orbit consists of those $Q$ for which no totally singular $k$-dimensional $W<V$ exists.  Elements of the first orbit are said to be of $+$ type, and have stabilizer in $G$ isomorphic to $O^+_{2k}(2)$.  Elements of the second orbit are said to be of $-$ type, and have stabilizer in $G$ isomorphic with $O^-_{2k}(2)$.  According to \cite[Theorem 2]{Dye}, every $g \in G$ fixes at least one quadratic form satisfying (\ref{irthform}).  With this in mind, we identify elements of prime order that do not fix quadratic forms of both types.

\begin{lemma} \label{lem.bothtypes}
Assume $g \in G$ has prime order $r$.
\begin{enumerate}
\item If $r$ is odd, then $g$ fixes quadratic forms of both types if and only if $C_V(g) \neq 0$.
\item If $r=2$ and $g$ does not have Jordan type $(2,\ldots,2)$, then $g$ fixes quadratic forms of both types.
\item If $k$ is even, then $G$ has two conjugacy classes of elements of order $2$ and Jordan type $(2,2,\ldots,2)$.  Elements of one class fix a quadratic form of $+$ type but do not fix a quadratic form of $-$ type.  Elements of the other class fix quadratic forms of both types.
\item If $k$ is odd, then $G$ has one conjugacy class of elements of order $2$ and Jordan type $(2,2,\ldots,2)$.  Elements of this class fix quadratic forms of both types.
\end{enumerate}
\end{lemma}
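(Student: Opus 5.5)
The plan is to use the affine structure of the set of quadratic forms polarizing to $(\cdot,\cdot)$. Any two such forms differ by a form $v\mapsto (v,a)^2=(v,a)$, so the set is a torsor $\{Q_0+\lambda_a : a\in V\}$ with $\lambda_a(v)=(v,a)$. The type of $Q_0+\lambda_a$ is determined by the Arf invariant, which satisfies $\mathrm{Arf}(Q_0+\lambda_a)=\mathrm{Arf}(Q_0)+Q_0(a)$. One checks that $(Q_0+\lambda_a)^g=Q_0^g+\lambda_{ag}$. Hence if $g$ fixes $Q_0$, which exists by \cite[Theorem 2]{Dye}, then $g$ fixes $Q_0+\lambda_a$ if and only if $a\in C_V(g)$. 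So the types of forms fixed by $g$ are exactly the values $\mathrm{Arf}(Q_0)+Q_0(a)$ for $a\in C_V(g)$. This yields the criterion:
\begin{center}
$g$ fixes forms of both types $\iff$ $Q_0$ is not identically zero on $C_V(g)$,
\end{center}
for any $g$-fixed $Q_0$. Everything reduces to deciding whether $C_V(g)$ is totally singular for some, or every, fixed $Q_0$.

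\emph{Case $r$ odd.} If $C_V(g)=0$, only $Q_0$ itself is fixed, so only one type occurs. Conversely, suppose $C_V(g)\ne 0$. Since $g$ is semisimple, $V=C_V(g)\perp[V,g]$, and both summands are nondegenerate for $(\cdot,\cdot)$. The restriction of $Q_0$ to the nondegenerate space $C_V(g)\neq 0$ cannot vanish identically, because a totally singular subspace is totally isotropic for the polar form. So both types occur.

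\emph{Case $r=2$.} Here $C_V(g)\supseteq [V,g]=\mathrm{Im}(g-1)$, and $[V,g]^\perp=C_V(g)$. For $v\in V$ we compute
\[Q_0(v(g-1))=Q_0(vg)+Q_0(v)+(vg,v)=(vg,v),\]
since $Q_0(vg)=Q_0(v)$. If the Jordan type is not all $2$'s, there is a Jordan block of size $1$, so $C_V(g)\supsetneq[V,g]$ and $C_V(g)$ is not totally isotropic. Indeed, $C_V(g)^\perp=[V,g]\ne C_V(g)$ and $\dim C_V(g)>k$. Hence $Q_0$ is nonzero on $C_V(g)$, which proves (2). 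For Jordan type $(2,\dots,2)$ we have $C_V(g)=[V,g]$, of dimension $k$, and it is totally isotropic. By the computation above, $Q_0$ vanishes on it if and only if $(vg,v)=0$ for all $v$. This condition does not depend on $Q_0$; it is the classical $a$/$c$-type distinction of Aschbacher--Seitz for involutions. If $(vg,v)\equiv 0$, then $C_V(g)$ is a $k$-dimensional totally singular subspace, so every fixed form is of $+$ type. Otherwise both types occur.

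\emph{Counting classes.} It remains to classify involutions of type $(2^k)$ via the form $\beta(u,w)=(u g,w)$ on $\bar V=V/C_V(g)$. This form is nondegenerate and symmetric. The alternating forms give one class, and the non-alternating symmetric forms give another, since nondegenerate symmetric forms over $\ff_2$ are classified by whether they are alternating. Conjugacy of involutions with the same $\beta$-type follows from Witt's theorem / the standard Aschbacher--Seitz classification. An alternating nondegenerate form requires $k$ even. Thus for $k$ odd there is a single class, and it fixes both types, giving (4). For $k$ even there are two classes: the $a_k$ class fixes only $+$ forms and the $c_k$ class fixes both, giving (3). The main obstacle is this last classification step. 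I would cite Aschbacher--Seitz for the conjugacy classes of involutions in $Sp_{2k}(2)$ rather than reprove it.
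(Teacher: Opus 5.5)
Your proof is correct, but it takes a different route from the paper. The paper proves the lemma almost entirely by citation. For (1) it uses Dye's Theorem 6: $h$ fixes forms of both types iff $C_V(h)$ is not totally singular for some $h$-fixed $Q$. For the converse it splits $V=C\perp C^\perp$ and glues fixed forms on the two summands via Kleidman--Liebeck, Proposition 2.5.11(ii). Parts (2)--(4) are read off from Liebeck--Seitz, Theorems 7.3 and 6.21.

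Your torsor computation, $\mathrm{Arf}(Q_0+\lambda_a)=\mathrm{Arf}(Q_0)+Q_0(a)$ together with $(Q_0+\lambda_a)^g=Q_0^g+\lambda_{ag}$, reproves Dye's criterion from scratch. It even gives a slightly sharper form: the criterion holds for every $g$-fixed $Q_0$, not just for some. This buys you several things:
\begin{itemize}
\item The converse in (1) becomes immediate, since a nonzero nondegenerate subspace cannot be totally singular.
\item Part (2) reduces to the dimension count $\dim C_V(g)>k$.
\item For Jordan type $(2,\ldots,2)$ you isolate the intrinsic, $Q_0$-independent condition $(vg,v)\equiv 0$. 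This explains exactly which involution class fails to fix a form of $-$ type.
\end{itemize}
What you still cite is Dye's Theorem 2 (existence of a fixed form, which the paper also uses) and the Aschbacher--Seitz classification of involution classes. The latter plays the role of the paper's Liebeck--Seitz citation and also supplies existence of both classes when $k$ is even. Alternatively, you can see existence directly from $g=\begin{pmatrix} I&0\\ S&I\end{pmatrix}$ with $S$ nondegenerate symmetric, alternating or not.

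There is one small slip. As written, $\beta(u,w)=(ug,w)$ is not well defined on $V/C_V(g)$: for $u\in C_V(g)$ it equals $(u,w)$, which need not vanish. The form you want is $\beta(u,w)=(u(g-1),w)=(ug,w)+(u,w)$. It is well defined, symmetric and nondegenerate on $V/C_V(g)$, and its diagonal values are $(ug,u)$. So your alternating versus non-alternating dichotomy, and everything after it, goes through unchanged.
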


\begin{proof}
We begin with (1).  According to \cite[Theorem 6]{Dye}, any $h \in G$ fixes an element of each orbit if and only if there is some $Q$ fixed by $h$ such that $C_V(h)$ is not totally singular with respect to $Q$. Therefore, if $C_V(g)=0$, then $g$ does not fix elements of both orbits.  On the other hand, if $C_V(g) \neq 0$, then the restriction $(\cdot,\cdot)_{C_V(g)}$ of $(\cdot,\cdot)$ to $C_V(g)$ is nondegenerate (see for example \cite[Lemma 2]{Dye}).  We write $C$ for $ C_V(g)$. Now $V=C \perp C^\perp$, with $g$ acting trivially on $C$ and preserving the restriction of $(\cdot,\cdot)$ to $C^\perp$.  It follows that the restriction $g_C$ preserves nondegenerate quadratic forms of both types that satisfy (\ref{irthform}) with respect to $(\cdot,\cdot)_C$.  Similarly, $g_{C^\perp}$ preserves some quadratic form satisfying (\ref{irthform}) with respect to $(\cdot,\cdot)_{C^\perp}$.  By \cite[Proposition 2.5.11(ii)]{KleidmanLiebeck}, $g$ preserves nondegenerate quadratic forms of both types on $V$.

Claims (2),(3), and (4) follow from \cite[Theorems 7.3 and 6.21]{LiebeckSeitzUnipotentBook}.
\end{proof}

\begin{cor} \label{cor.fpf}
If $G$ is generated invariably by two elements $g,h$ of prime order then (up to exchanging $g$ and $h$) either
\begin{enumerate}
\item $g$ and $h$ have distinct odd orders and $C_V(g)=C_V(h)=0$, or
\item $g$ has order $2$ and Jordan type $(2,2,\ldots,2)$ and fixes no quadratic form of $-$ type, while $h$ has odd order and $C_V(h)=0$.
\end{enumerate}
\end{cor}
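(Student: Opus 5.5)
The plan is to argue by contraposition from the fact that quadratic-form stabilizers are proper subgroups, and then let Lemma \ref{lem.bothtypes} do the sorting.

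\textbf{The key observation.} Suppose $g$ fixes some quadratic form $Q_1$ of $+$ type and $h$ fixes some quadratic form $Q_2$ of $+$ type. Since $G$ acts transitively on the forms of $+$ type, there is $x \in G$ with $Q_1^x = Q_2$. Then $x^{-1}gx$ fixes $Q_2$. Hence $\langle x^{-1}gx, h\rangle$ lies in the stabilizer of $Q_2$, which is isomorphic to $O^+_{2k}(2)$ and is a proper subgroup of $G$. So $g,h$ do not generate $G$ invariably. The same argument applies to forms of $-$ type.

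\textbf{Consequence.} By \cite[Theorem 2]{Dye}, each of $g$ and $h$ fixes at least one form. If $g$ and $h$ generate $G$ invariably, the observation shows they cannot both fix a form of the same type. In particular, neither element can fix forms of both types, since the other element fixes some form. So one of them, say $g$ after exchanging, fixes only $+$-type forms, and $h$ fixes only $-$-type forms.

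\textbf{Sorting by order with Lemma \ref{lem.bothtypes}.}
\begin{itemize}
\item An involution that fixes forms of only one type must, by parts (2)--(4), have Jordan type $(2,\ldots,2)$. It must also lie in the class from part (3), so it fixes only $+$-type forms. Hence at most one of $g,h$ is an involution, and if one is, it is $g$ with exactly the properties stated in (2).
\item An element of odd prime order that fixes forms of only one type satisfies $C_V(\cdot)=0$, by part (1).
\end{itemize}

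If neither element is an involution, both have odd prime order and trivial centralizer on $V$. Their orders must differ: if $g$ and $h$ had the same order $r$, then $g$ and $h$ would be conjugate to elements of a common Sylow $r$-subgroup $P$. Then $\langle x^{-1}gx, y^{-1}hy\rangle \le P < G$ for suitable $x,y \in G$, contradicting invariable generation. This gives case (1).

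If one element is an involution, the second bullet applies to the other element, giving case (2).

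The only step needing any care is the first: checking that the stabilizer of a form is a proper subgroup and that conjugating moves one form to another of the same type. Both facts are immediate from the orbit description given before Lemma \ref{lem.bothtypes}. The remaining steps are bookkeeping.
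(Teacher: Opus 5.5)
Your proposal is correct and follows the paper's proof: every element fixes some form, so any element fixing forms of both types shares a form-stabilizer with a conjugate of the other generator. Lemma \ref{lem.bothtypes} and Sylow's Theorem then sort the possible orders and types exactly as the paper does, and your transitivity argument and the finer $+$/$-$ split spell out details the paper leaves implicit.
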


\begin{proof}
The orders of $g$ and $h$ are distinct by Sylow's Theorem.  No element that fixes quadratic forms of both types can generate $G$ invariably with another element, since every element fixes some quadratic form.  The corollary follows from Lemma \ref{lem.bothtypes}.
\end{proof}

We record some known facts about elements $g \in G$ of odd prime order satisfying $C_V(g)=0$.  Given such $g$, assume $|g|=p$.  Then $p$ is a Zsigmondy prime for some $2^d-1$ with $2 \leq d \leq 2k$.  In fact, $d$ divides $2k$, since every nontrivial irreducible $\ff_2$-representation of $\zz_p$ has degree $d$.  (See for example \cite[Theorem 2.47(ii)]{LN}.)

\begin{lemma} \label{lem.structure}
Assume $d$ divides $2k$ and $p$ is a Zsigmondy prime for $2^d-1$.  Let $g \in G$ have order $p$ with $C_V(g)=0$.
\begin{enumerate}
\item If $d$ is even, then $V=\bigperp_{j=1}^{2k/d}\space\space V_j$, with each $V_j$ a $d$-dimensional $g$-invariant subspace on which the restriction of $(\cdot,\cdot)$ is nondegenerate and on which $\langle g \rangle$ acts irreducibly.
\item If $d$ is odd, then $V=\bigperp_{j=1}^{k/d}\space\space V_j$, with each $V_j$ a $2d$-dimensional $g$-invariant subspace on which the restriction of $(\cdot,\cdot)$ is nondegenerate.  Moreover, each $V_j =W_j \oplus X_j$ with $W_j$ and $X_j$ totally singular $d$-dimensional $g$-invariant subspaces on which $\langle g \rangle$ acts irreducibly.
\end{enumerate}
Conversely, if $d$ divides $2k$ and $p$ is a Zsigmondy prime for $2^d-1$ then $G$ has an element $g$ of order $p$ satisfying $C_V(g)=0$.
\end{lemma}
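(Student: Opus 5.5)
Since $C_V(g)=0$ and $p$ is odd, Maschke's theorem lets us decompose $V$ as a direct sum of irreducible $\ff_2\langle g\rangle$-modules. Each is nontrivial, so each has dimension $d$, where $d$ is the multiplicative order of $2$ modulo $p$. The irreducible constituents correspond to $\mathrm{Gal}(\ff_{2^d}/\ff_2)$-orbits of nontrivial $p$-th roots of unity $\zeta$ occurring as eigenvalues of $g$ on $V\otimes\overline{\ff_2}$. The form gives a $g$-equivariant isomorphism $V\cong V^*$, so the dual of an irreducible constituent with eigenvalue orbit $[\zeta]$ is the constituent with orbit $[\zeta^{-1}]$. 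Moreover, $[\zeta]=[\zeta^{-1}]$ exactly when $-1\equiv 2^i \pmod p$ for some $i$, that is, exactly when $d$ is even (then $2^{d/2}\equiv -1$). The plan is to split $V$ into isotypic components $V_{[\zeta]}$ and to use the standard fact that $(V_{[\zeta]},V_{[\eta]})=0$ unless $[\eta]=[\zeta^{-1}]$. This holds because a $g$-invariant pairing between two modules induces a module map from one into the dual of the other, and Schur's lemma kills that map.

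\textbf{Case $d$ even.} Each isotypic component $V_{[\zeta]}$ is self-dual and the components are mutually orthogonal, so the form is nondegenerate on each of them. It remains to split a nondegenerate isotypic $U=V_{[\zeta]}$ orthogonally into irreducibles. I would argue by induction on $\dim U$: choose an irreducible submodule $M\le U$. The radical $M\cap M^\perp$ is $g$-invariant, so it is either $0$ or $M$. If it is $0$, then $U=M\perp M^\perp$ and we recurse on $M^\perp$. If every irreducible submodule were totally isotropic, I would use the field structure instead: $E=\mathrm{End}_{\langle g\rangle}(M)\cong\ff_{2^d}$ carries the involution $g\mapsto g^{-1}$, which is the field automorphism $\sigma$ of order $2$. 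Then $U$ is an $E$-space, and by the standard transfer $(u,v)=\mathrm{Tr}_{E/\ff_2}\langle u,v\rangle$ the alternating form on $U$ comes from a nondegenerate $\sigma$-hermitian form $\langle\cdot,\cdot\rangle$ on $U$ over $E$. The irreducible submodules are exactly the $1$-dimensional $E$-subspaces. A nondegenerate hermitian form has an anisotropic vector, and the line through it is nondegenerate for $(\cdot,\cdot)$. So we may always take $M$ nondegenerate. This hermitian-form step is where I expect the main care to be needed, namely checking that $(\cdot,\cdot)$ really is the trace of a hermitian form. Alternatively, one could cite \cite{KleidmanLiebeck} for the standard embedding of $C_G(g)\cong GU_{2k/d}(2^{d/2})$.

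\textbf{Case $d$ odd.} Here $[\zeta]\ne[\zeta^{-1}]$. Group the components in pairs $V_{[\zeta]}\oplus V_{[\zeta^{-1}]}$. Each summand is totally isotropic, the pairs are mutually orthogonal, and the form restricted to each pair is nondegenerate, identifying $V_{[\zeta^{-1}]}$ with the dual of $V_{[\zeta]}$. Decompose $V_{[\zeta]}=\bigoplus_j W_j$ into irreducibles, and let $X_j\le V_{[\zeta^{-1}]}$ be the "dual basis" pieces: $X_j$ is the annihilator of $\bigoplus_{i\ne j}W_i$ inside $V_{[\zeta^{-1}]}$. Then $X_j$ is $g$-invariant of dimension $d$, hence irreducible. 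Set $V_j=W_j\oplus X_j$. These subspaces are mutually orthogonal and each is nondegenerate, and their count is $2k/(2d)=k/d$. In particular $d$ divides $k$ in this case. (Isotropy of $W_j$ is automatic for a symplectic form and for this pairing; for the quadratic form nothing is needed here.)

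\textbf{Converse.} Let $\ff_{2^d}^\times$ contain an element $\lambda$ of order $p$. If $d$ is even, multiplication by $\lambda$ on $\ff_{2^d}$ preserves the alternating form $\mathrm{Tr}(\theta x\bar y)$, where $\bar y=y^{2^{d/2}}$ and $\theta$ is chosen with $\bar\theta=-\theta=\theta$ and $\theta\notin\ff_{2^{d/2}}$ so that the form is alternating. Indeed $\lambda\bar\lambda=\lambda^{1+2^{d/2}}=1$ since $p\mid 2^{d/2}+1$. If $d$ is odd, let $\lambda$ act on $W=\ff_{2^d}$ and by $\lambda^{-*}$ on $W^*$, with the standard symplectic pairing on $W\oplus W^*$. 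Taking an orthogonal sum of $2k/d$ (respectively $k/d$) copies gives $g\in G$ of order $p$ with $C_V(g)=0$.
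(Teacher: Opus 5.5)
Your proof is correct apart from one slip in the converse (below), but it takes a different route from the paper.

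\textbf{Comparison with the paper.} The paper proves (1) by induction on $2k/d$. It picks an irreducible $d$-dimensional $W$ and splits it off if it is nondegenerate. If $W$ is totally isotropic, it invokes \cite[Lemma 4.1.12]{KleidmanLiebeck} to get a partner $Y$ with $W\oplus Y$ nondegenerate. The $2d$-dimensional piece $W\oplus Y$ is then settled by noting that the stabilizer of an orthogonal splitting into two nondegenerate $d$-spaces contains a Sylow $p$-subgroup. Part (2) and its converse are likewise taken from \cite[Lemma 4.1.12]{KleidmanLiebeck} plus induction. The even-$d$ converse is reduced to $d=2k$, using that $p$ does not divide the order of the stabilizer of a $1$-space.

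You instead work intrinsically:
\begin{enumerate}
\item The isotypic decomposition plus Schur's lemma gives the orthogonality pattern.
\item For $d$ odd, the dual-basis construction produces $W_j$ and $X_j$ directly.
\item For $d$ even, the transfer $(u,v)=\mathrm{Tr}_{E/\ff_2}\langle u,v\rangle$ to a nondegenerate $\sigma$-hermitian form replaces the Sylow/Kleidman--Liebeck step, since anisotropic vectors give nondegenerate irreducible submodules.
\end{enumerate}
Your checks go through: the adjoint of $g$ is $g^{-1}=\sigma(g)$, and symmetry of $(\cdot,\cdot)$ forces hermitian symmetry. It is cleanest to take $E=\ff_2[g|_U]$ so that $E$ visibly acts on $U$. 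Your route is self-contained and shows the unitary structure on each isotypic component. The paper's is shorter because it outsources the key pairing step to \cite{KleidmanLiebeck}.

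\textbf{The slip.} In the converse for $d$ even, your conditions on $\theta$ cannot be met. In characteristic $2$, $\bar\theta=-\theta=\theta$ forces $\theta\in\ff_{2^{d/2}}$, contradicting $\theta\notin\ff_{2^{d/2}}$. That second requirement is the odd-characteristic recipe. Here just take $\theta=1$. Since $x\bar x\in\ff_{2^{d/2}}$, we get $\mathrm{Tr}_{\ff_{2^d}/\ff_2}(x\bar x)=\mathrm{Tr}_{\ff_{2^{d/2}}/\ff_2}(2x\bar x)=0$. So $\mathrm{Tr}(x\bar y)$ is already a nondegenerate alternating form, and multiplication by $\lambda$ preserves it because $\lambda\bar\lambda=1$.
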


\begin{proof}
To prove (1), we proceed by induction on $2k/d$.  If $2k/d \leq 2$ then the stabilizer of an orthogonal decomposition into $2k/d$ nondegenerate subspaces of dimension $d$ contains a Sylow $p$-subgroup of $G$.  Assume now that $2k/d>2$. Pick some $d$-dimensional $W<V$ on which $\langle g \rangle$ acts nontrivially and irreducibly.  By irreducibility, $W$ is either nondegenerate or totally singular with respect to $(\cdot,\cdot)$.  In the first case we apply the inductive hypothesis to $W^\perp$.  In the second case, by \cite[Lemma 4.1.12]{KleidmanLiebeck}, there exists $d$-dimensional $Y<V$ such that $\langle g \rangle$ acts nontrivially and irreducibly on $Y$ and $W \oplus Y$ is nondegenerate with respect to $(\cdot,\cdot)$.  We apply the inductive hypothesis to the proper subspaces $W \oplus Y$ and $(W \oplus Y)^\perp$.   

To prove the converse, it suffices to prove it in the case $d=2k$.  To do this, we observe that the order of the stabilizer of a $1$-space from $V$ is not divisible by $p$, see for example \cite[Proposition 4.1.19]{KleidmanLiebeck}.

Claim (2) and its converse follow from \cite[Lemma 4.1.12]{KleidmanLiebeck} and induction on dimension.
\end{proof}

The following well-known technical lemma will be useful.  Say $d=\ell m$ and $q$ is a prime power.  Then $\ff_{q^\ell}^m$ and $\ff_q^d$ are isomorphic as $\ff_q$-vector spaces, and the natural action of $GL_m(q^\ell)$ on $\ff_{q^\ell}^m$ is $\ff_q$-linear.  This gives an embedding $\iota:GL_m(q^\ell) \hookrightarrow GL_d(q)$.  Moreover, if $H \leq GL_m(q^\ell)$ preserves a nondegenerate bilinear, sesquilinear, or quadratic form on $\ff_{q^\ell}^m$ then $\iota(H)$ preserves a form of the same type on $\ff_q^d$.

\begin{lemma} \label{lem.extensions}
Assume $d=\ell m$ and $g \in GL_m(q^\ell)$ and let ${\mathbb G}$ be the Galois group of the extension $\ff_q \hookrightarrow \ff_{q^\ell}$.
\begin{enumerate}
\item If $g$ (acting on $\ff_{q^\ell}^m$) has characteristic polynomial $\Phi_g(X) \in \ff_{q^\ell}[X]$ then $\iota(g)$ (acting on $\ff_q^d$) has characteristic polynomial $$\prod_{\alpha \in {\mathbb G}} \alpha(\Phi_g(X)) \in \ff_q[X].$$
\item If $g$ (acting on $\ff_{q^\ell}^m$) is unipotent of Jordan type $\lambda$ then the Jordan type of $\iota(g)$ (acting on $\ff_q^d$) is obtained by concatenating $\ell$ copies of $\lambda$ and permuting entries.
\end{enumerate}
\end{lemma}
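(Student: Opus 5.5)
Both parts come down to comparing the field $\ff_{q^\ell}$ with the field $\ff_q$ after extending scalars to an algebraic closure, or more simply to a splitting field. Let $K$ be an algebraic closure of $\ff_q$ containing $\ff_{q^\ell}$. Write $U=\ff_{q^\ell}^m$ and regard it as an $\ff_q$-space of dimension $d$; $\iota(g)$ is the same map viewed $\ff_q$-linearly. I would use the standard isomorphism of $\ff_{q^\ell}$-algebras
$$\ff_{q^\ell}\otimes_{\ff_q}\ff_{q^\ell}\cong \prod_{\alpha\in{\mathbb G}}\ff_{q^\ell},\qquad a\otimes b\mapsto (\alpha(a)b)_{\alpha}.$$
This holds because $\ff_{q^\ell}/\ff_q$ is Galois with group ${\mathbb G}$ cyclic of order $\ell$.

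Tensoring with $U$ then gives a decomposition of $\ff_{q^\ell}$-spaces
$$U\otimes_{\ff_q}\ff_{q^\ell}\cong\bigoplus_{\alpha\in{\mathbb G}}U^{\alpha}.$$
Here $U^\alpha$ is $U$ with scalars twisted by $\alpha$, and the decomposition is $\iota(g)\otimes 1$-equivariant. On $U^\alpha$, in a fixed $\ff_{q^\ell}$-basis of $U$, the map $g$ acts by the matrix obtained by applying $\alpha$ to every entry of the matrix of $g$. Concretely, $U\otimes\ff_{q^\ell}$ splits into the eigenspaces of the operator $x\otimes 1$ ($x$ a primitive element), whose eigenvalues are the conjugates $\alpha(x)$, and $g$ commutes with $x\otimes1$ because $g$ is $\ff_{q^\ell}$-linear. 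Hence $g$ preserves each eigenspace. The eigenspace for $\alpha(x)$ is identified with $U$ via the twisted scalar action, and in the given basis the action of $g$ there is $\alpha$ applied to the matrix of $g$.

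For (1), the characteristic polynomial of $\iota(g)$ is unchanged by extending scalars to $\ff_{q^\ell}$. It therefore equals the product over $\alpha\in{\mathbb G}$ of the characteristic polynomials of $\alpha(g)$, namely $\prod_\alpha\alpha(\Phi_g(X))$. This product lies in $\ff_q[X]$ since it is ${\mathbb G}$-invariant.

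For (2), Jordan type is also invariant under extension of scalars. It is determined by the ranks of $(\iota(g)-1)^i$, and rank does not change under field extension. The Jordan type of $\iota(g)$ is therefore the multiset union of the Jordan types of the $\alpha(g)$. Each $\alpha(g)$ is unipotent with the same Jordan type $\lambda$ as $g$, because $\alpha$ is a ring automorphism and so preserves the ranks of $(g-1)^i$. Concatenating $\ell$ copies of $\lambda$ and sorting gives the claim.

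The only step needing care is the equivariant splitting of $U\otimes_{\ff_q}\ff_{q^\ell}$ into twisted copies, and in particular the identification of the action on each summand with $\alpha(g)$; everything else is bookkeeping. An alternative I would keep in reserve for (2) is a direct rank count. For any $\ff_{q^\ell}$-linear map $A$, $\dim_{\ff_q}\ker A=\ell\dim_{\ff_{q^\ell}}\ker A$; applying this to $A=(g-1)^i$ shows that every kernel dimension, and hence every Jordan block multiplicity, is multiplied by $\ell$. This proves (2) without any tensor products.
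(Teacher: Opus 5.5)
Your proposal is correct and follows the paper's route. The paper extends scalars to $\overline{\ff_q}$ and notes, citing Kleidman--Liebeck, Lemma 2.10.2(ii), that $\iota$ becomes the direct sum of the Galois twists $I\circ\alpha$, $\alpha\in{\mathbb G}$, from which both parts follow exactly as in your argument; you extend only to $\ff_{q^\ell}$ and verify the twisted decomposition yourself instead of citing it, and your reserve argument for (2) via $\dim_{\ff_q}\ker (g-1)^i=\ell\dim_{\ff_{q^\ell}}\ker (g-1)^i$ is also valid and more elementary.
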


\begin{proof}
Write $\overline{\ff_q}$ for the algebraic closure of $\ff_q$ and let $I:GL_m(q^\ell) \rightarrow GL_m(\overline{\ff_q})$ be the identity embedding.  Both claims follow from the fact that the representation of $GL_{m}(q^\ell)$ on $\ff_q^d \otimes \overline{\ff_q}$ determined by $\iota$ is the direct sum of the representations $I \circ \alpha$ over all $\alpha \in {\mathbb G}$ (see for example \cite[Lemma 2.10.2(ii)]{KleidmanLiebeck}).
\end{proof}

Assume $\ell$ is a prime divisor of $2k$.  Unless $\ell=2$ and $k$ is odd, $GL_{2k/\ell}(2^\ell)$ has subgroups $H \cong Sp_{2k/\ell}(2^\ell)$.  We fix one such $H$.  As noted above, $\iota(H) \leq G$, and the normalizer $M_3(\ell)$ of $\iota(H)$ in $G$ is isomorphic with $Sp_{2k/\ell}(2^\ell):\zz_\ell$ (see \cite[Proposition 4.3.10]{KleidmanLiebeck}). So, the groups $M_3(\ell)$ are the maximal subgroups lying in Aschbacher class ${\mathcal C}_3$.

We will see that all of the elements described in Corollary \ref{cor.fpf} are contained in some conjugate of some $M_3(\ell)$, and some of them are contained in a conjugate of every $M_3(\ell)$.

\begin{lemma} \label{lem.c3}
Let $p$ be a Zsigmondy prime for $2^d-1$ with $d$ dividing $2k$.
\begin{enumerate}
\item If odd prime $\ell$ divides $d$ then $M_3(\ell)$ contains a Sylow $p$-subgroup of $G$.
\item If $k$ is even and $d$ is divisible by $4$ then $M_3(2)$ contains a Sylow $p$-subgroup of $G$.
\end{enumerate}
\end{lemma}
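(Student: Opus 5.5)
Since $M_3(\ell)$ contains $\iota(H)$ with $H\cong Sp_{2k/\ell}(2^\ell)$, I will compare the $p$-parts of $|G|$ and $|H|$. By Sylow's theorem it suffices to show $|G|_p=|H|_p$, or at least $|G|_p \le |M_3(\ell)|_p$. The index $[M_3(\ell):\iota(H)]=\ell$ is prime to $p$ whenever $\ell\ne p$. This holds here: in (1), $p$ is Zsigmondy for $2^d-1$, so $p\equiv 1 \pmod d$ and $p>d\ge \ell$. In (2), $\ell=2$ and $p$ is odd. So I only need to compare $p$-parts of orders of symplectic groups.

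The orders are
\[
|Sp_{2k}(2)|=2^{k^2}\prod_{i=1}^{k}(2^{2i}-1),\qquad |Sp_{2k/\ell}(2^\ell)|=2^{\ell k^2/\ell^2\cdot \ell}\prod_{j=1}^{k/\ell}(2^{2\ell j}-1)
\]
(the power of $2$ is irrelevant). Since $2$ has multiplicative order exactly $d$ modulo $p$, $p$ divides $2^n-1$ if and only if $d\mid n$. Lifting the exponent then gives $v_p(2^n-1)=v_p(2^d-1)+v_p(n/d)$ for $d\mid n$.

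Hence $v_p(|G|)=\sum_{i\le k,\ d\mid 2i} v_p(2^{2i}-1)$ and $v_p(|H|)=\sum_{j\le k/\ell,\ d\mid 2\ell j} v_p(2^{2\ell j}-1)$. The map $j\mapsto i=\ell j$ sends terms of the second sum to terms of the first with identical values, since $2^{2\ell j}-1=2^{2i}-1$. So the second sum is the subsum of the first over those $i$ divisible by $\ell$. It therefore suffices to show that every $i\le k$ with $d\mid 2i$ is divisible by $\ell$.

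In case (1), $\ell$ is odd and $\ell\mid d\mid 2i$, so $\ell\mid i$. In case (2), $4\mid d\mid 2i$ gives $2\mid i$. Also $k$ even ensures $H\cong Sp_{k}(4)$ exists as stated before the lemma; in case (1) it exists because $\ell\ne 2$ and $\ell \mid d \mid 2k$ forces $\ell\mid k$. So the $p$-parts agree and a Sylow $p$-subgroup of $\iota(H)\le M_3(\ell)$ is Sylow in $G$.

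The main obstacle is just the bookkeeping in the valuation comparison. The clean way around it is the term-matching argument above, which needs only that $p\mid 2^n-1 \iff d\mid n$; this avoids computing valuations explicitly, so even the lifting-the-exponent formula is unnecessary.
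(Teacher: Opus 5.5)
Your proof is correct and follows essentially the same route as the paper: only the factors $2^{2i}-1$ with $d\mid 2i$ contribute to the $p$-part of $|G|$, and every such $i$ is divisible by $\ell$, so these factors already appear in $|Sp_{2k/\ell}(2^\ell)|$. One small slip that does not affect the argument: the power of $2$ in $|Sp_{2k/\ell}(2^\ell)|$ should be $2^{k^2/\ell}$, not $2^{k^2}$, but as you say it plays no role.
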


\begin{proof}
Assume first that $\ell$ is odd.  Since $\gcd(2^d-1,2^a-1)=2^{\gcd(d,a)}-1$, the $p$-part of $|G|$ is the $p$-part of $\prod_{d|2j \leq 2k}(2^{2j}-1)=\prod_{d|2j \leq 2k}((2^\ell)^{2j/\ell}-1)$, which divides $\prod_{i=1}^{k/\ell}(2^{2i\ell}-1)$, and thererfore divides $|M_3(\ell)|$.  Similarly, if $k$ is even and $4$ divides $d$, then the $p$-part of $|G|$ is the $p$-part of $\prod_{d|2j \leq 2k}(4^j-1)$.  Since $4$ divides $d$, if $d$ divides $2j$ then $j$ is even.  So, $\prod_{d|2j \leq 2k}(4^j-1)$ divides $\prod_{i=1}^{k/2}(4^{2i}-1)$, which divides $|M_3(2)|$.
\end{proof}

We observe that if $d>2$, then either some odd prime divides $d$ or $4$ divides $d$.  Therefore, if $p$ is a Zsigmondy prime for $2^d-1$ and $g \in G$ has order $p$, then some $M_3(\ell)$ contains a conjugate of $g$.  The case $d=2$ is a special case of the next lemma.

\begin{lemma} \label{lem.everym3}
Assume that $d$ divides $2k$ and that $p$ is a Zsigmondy prime for $2^d-1$.  If either
\begin{itemize}
\item[(a)] $d$ is even and $p=d+1$ or
\item[(b)] $d$ is odd and $p=2d+1$,
\end{itemize}
then there is a unique conjugacy class $\mc(p)$ in $G$ such that every $g \in \mc(p)$ has order $p$ and satisfies $C_V(g)=0$.  Moreover, $\mc(p)$ intersects every $M_3(\ell)$ nontrivially.
\end{lemma}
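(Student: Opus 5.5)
Under hypothesis (a) or (b), $2$ has multiplicative order $d$ modulo $p$ and $p-1\in\{d,2d\}$. So $2$ is either a primitive root mod $p$ (when $p-1=d$) or generates the index-two subgroup of $\ff_p^\times$ (when $p-1=2d$). In either case, the nontrivial irreducible $\ff_2\langle g\rangle$-modules are easy to enumerate, since each corresponds to a Frobenius orbit on primitive $p$-th roots of unity. The plan is to prove uniqueness of the class and the intersection property separately.

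\emph{Uniqueness.} In case (a), $p-1=d$, so there is exactly one nontrivial irreducible module, of dimension $d$; it is self-dual. By Lemma \ref{lem.structure}(1), any $g$ of order $p$ with $C_V(g)=0$ acts on $V$ as an orthogonal sum of $2k/d$ copies of this module, each nondegenerate. An element of order $p$ in $Sp_d(2)$ acting irreducibly is unique up to conjugacy in $Sp_d(2)$. The reason: $\langle g\rangle$ is then a Sylow $p$-subgroup of $Sp_d(2)$, because the $p$-part of $|Sp_d(2)|$ is the $p$-part of $2^d-1$, and the Zsigmondy condition together with $p=d+1$ forces $p^2\nmid 2^d-1$ (I would check this via $\gcd$ arguments or accept that a cyclic Sylow suffices). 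Moreover, all generators of $\langle g\rangle$ are Galois-conjugate, hence have the same rational canonical form, so by Sylow the elements of order $p$ form one class. Using Witt's theorem to match the orthogonal decompositions then gives a single $G$-class. In case (b), $p-1=2d$, so there are exactly two nontrivial irreducibles of dimension $d$, dual to one another ($2$ is not a power of $-1$'s orbit, as $-1\notin\langle 2\rangle$). By Lemma \ref{lem.structure}(2), each $V_j=W_j\oplus X_j$ with $X_j\cong W_j^*$, so the $\ff_2\langle g\rangle$-module $V$ is determined. Two such elements with the same module structure are conjugate in $G$: both lie in a Levi $GL_d(2)$ of the stabilizer of $W_j$, where the rational canonical form determines the class, and Witt's theorem glues the pieces. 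The general principle I would invoke is that semisimple elements of $Sp(V)$ with the same characteristic polynomial are conjugate, which follows from this decomposition argument. Finally, all elements of order $p$ in $\langle g\rangle$ share the same characteristic polynomial, namely $((X^p-1)/(X-1))^{2k/(p-1)}$, so $\mc(p)$ is independent of the choice of generator.

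\emph{Intersection with every $M_3(\ell)$.} Let $\ell$ be a prime dividing $2k$, with $M_3(\ell)$ defined. Write $H\cong Sp_{2k/\ell}(2^\ell)\le GL_{2k/\ell}(2^\ell)$. It suffices to find $h\in H$ of order $p$ with no nonzero fixed vector on $\ff_{2^\ell}^{2k/\ell}$; then $C_V(\iota(h))=0$, since fixed points over $\ff_2$ form an $\ff_{2^\ell}$-subspace of the fixed space. By uniqueness, $\iota(h)\in\mc(p)$. Such an $h$ exists by the converse in Lemma \ref{lem.structure}, applied to $Sp_{2k/\ell}(q)$ with $q=2^\ell$, provided $2k/\ell$ is divisible by the order $e$ of $q$ mod $p$ (times 2 if $e$ is odd). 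This is where the special shape of $p$ matters: $e=d/\gcd(d,\ell)$. The main obstacle is checking divisibility in each case. If $\ell\nmid d$, then $e=d$, and $d\mid 2k$ together with $\gcd(d,\ell)=1$ gives $d\mid 2k/\ell$. This needs care when $\ell=2$ and $d$ is odd, where $e=d$ is odd and one needs $2d\mid 2k/2=k$. Here $\ell=2$ requires $k$ even, and $d$ odd divides $k$, so $2d\mid k$ holds only if $d\mid k/2$, which follows since $d$ is odd. Indeed, Lemma \ref{lem.structure} itself should be redone over $\ff_q$, which I expect to go through verbatim. If $\ell\mid d$, then $e=d/\ell$ and $d/\ell\mid 2k/\ell$. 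The parity condition for odd $e$ again needs checking via $k$ even when $\ell=2$. I expect this case bookkeeping, together with justifying Lemma \ref{lem.structure} over $\ff_{2^\ell}$, to be the step requiring the most care.
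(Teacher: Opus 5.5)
Your proposal is correct. The uniqueness half matches the paper's argument, and the intersection half takes a different route.

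\textbf{Uniqueness.} This is essentially the paper's argument. The paper notes that every fixed-point-free element of order $p$ has characteristic polynomial $((X^p-1)/(X-1))^{2k/(p-1)}$, so all such elements are $GL_{2k}(2)$-conjugate. It then cites \cite[Theorem 6.1.1]{DLO}: semisimple elements of $Sp_{2k}(2)$ that are conjugate in $GL_{2k}(2)$ are conjugate in $Sp_{2k}(2)$. That is exactly your ``general principle.''

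Your attempt to derive the principle from the decomposition works in case (b), where the Levi $GL(W_j)$ argument is self-contained. In case (a) it has a hole. Sylow's theorem plus equal rational canonical forms only shows two things: $\langle g\rangle$ is unique up to $Sp_d(2)$-conjugacy, and $g$ and $g^j$ are $GL_d(2)$-conjugate. To get $g\sim g^j$ in $Sp_d(2)$ you need $N_{Sp_d(2)}(\langle g\rangle)$ to induce all of $\mathrm{Aut}\langle g\rangle\cong\zz_d$. This is true: realize $g$ as multiplication by an element of order $p\mid 2^{d/2}+1$ on $\ff_{2^d}$, with the form $\mathrm{Tr}_{\ff_{2^d}/\ff_2}(xy^{2^{d/2}})$, which the Frobenius $x\mapsto x^2$ also preserves. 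So either supply this, or cite the theorem as the paper does.

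Also drop the claim $p^2\nmid 2^{p-1}-1$. It is a non-Wieferich condition that no gcd argument yields, and you do not need it. The Sylow $p$-subgroup of $Sp_d(2)$ is cyclic anyway, since it centralizes an element of order $p$ in its center acting irreducibly, and that centralizer embeds in $\ff_{2^d}^\times$.

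\textbf{Intersection with $M_3(\ell)$.} The paper splits into cases.
\begin{itemize}
\item If $\ell$ is an odd divisor of $d$, or $\ell=2$ and $4\mid d$, Lemma \ref{lem.c3} puts a Sylow $p$-subgroup of $G$ inside $M_3(\ell)$.
\item Otherwise $d\mid 2k/\ell$. The paper takes a fixed-point-free element of order $p$ in the subfield group $Sp_{2k/\ell}(2)\le Sp_{2k/\ell}(2^\ell)$ and reads off the characteristic polynomial of its image from Lemma \ref{lem.extensions}(1).
\end{itemize}

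You treat all $\ell$ uniformly. You build $h\in Sp_{2k/\ell}(2^\ell)$ of order $p$ with no fixed vector, via the converse of Lemma \ref{lem.structure} over $\ff_{2^\ell}$ with $e=d/\gcd(d,\ell)$. Then $\iota(h)$ has literally the same fixed points, and uniqueness finishes. This is valid and avoids Lemmas \ref{lem.c3} and \ref{lem.extensions}, at the cost of redoing Lemma \ref{lem.structure} over $\ff_q$, which does go through. The paper's route stays over $\ff_2$ and reuses lemmas it needs elsewhere anyway.

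Your bookkeeping closes in every case: $e\mid 2k/\ell$ always. The parity condition for odd $e$ is automatic, because $2k/\ell$ is even, so an odd $e$ dividing it divides $k/\ell$. One small correction: the special shape of $p$ plays no role in the formula $e=d/\gcd(d,\ell)$; it enters only through uniqueness.
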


\begin{proof}
By Lemma \ref{lem.structure}, $G$ contains some element $g_p$ of order $p$ satisfying $C_V(g_p)=0$.

If $p=d+1$ (so $d$ is even), then there is only one nontrivial irreducible $\ff_2$-representation of $\zz_p$, call it $\phi$.  If $x \in \zz_p \setminus \{1\}$ then the eigenvalues of $\phi(x)$ are the $d$ primitive $p^{th}$ roots of $1$ in the algebraic closure $\overline{\ff_2}$. It follows that, in the language of Lemma \ref{lem.structure}, $g_p$ acts on each $V_j$ with $d$ distinct eigenvalues.  This argument applies to every element $g \in G$ having order $p$ and satisfying $C_V(g)=0$.  Therefore all such $g$ have the same multiset of eigenvalues (each primitive $p^{th}$ appearing $2k/d$ times) and are conjugate in $GL_{2k}(2)$.  It follows that all such $g$ are conjugate in $G$ (see \cite[Theorem 6.1.1]{DLO}).

If $p=2d+1$ with $d$ odd, then $\zz_p$ has two nontrivial irreducible representations $\phi_1,\phi_2$, both of degree $d$.  The $\phi_i$ are dual to each other.  (The Galois group $\zz_d$ of the extension $\ff_2 \hookrightarrow \ff_{2^d}$ acts on the character group of $\zz_p$.  The fact that $d$ is odd precludes a nontrivial character and its inverse from being in the same orbit.) We can now apply Lemma \ref{lem.structure} as we did in the case $p=d+1$ to get the same result.

Finally, we show that each $M_3(\ell)$ intersects $\mc(p)$, the conjugacy class of $g_p$.  If $\ell$ is odd and divides $d$ or if $\ell=2$ and $4$ divides $d$ then the claim follows from Lemma \ref{lem.c3}.  Otherwise, $d$ divides the even number $2k/\ell$ and there is some $x \in Sp_{2k/\ell}(2) \leq Sp_{2k/\ell}(2^\ell)$ with each eigenvalue a primitive $p^{th}$ root of $1$ and all such roots having the same multiplicity as eigenvalues.  Our claim follows now from Lemma \ref{lem.extensions}(1).
\end{proof}

\begin{cor} \label{cor.everyzsig}
If $d$ divides $2k$, $p$ is a Zsigmondy prime for $2^d-1$, and $g \in G$ has order $p$ with $C_V(g)=0$, then there is some $\ell$ such that $M_3(\ell)$ contains a conjugate of $g$.
\end{cor}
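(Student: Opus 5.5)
The plan is a case split on $d$, following the observation made just before Lemma \ref{lem.everym3}. If $d>2$, then either $d$ has an odd prime divisor $\ell$, or $4$ divides $d$. We deal with these two situations first and then handle $d=2$ separately.

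\textbf{Case 1: an odd prime $\ell$ divides $d$.} Since $d$ divides $2k$ and $\ell$ is odd, $\ell$ divides $k$. Hence $M_3(\ell)$ is defined. By Lemma \ref{lem.c3}(1), $M_3(\ell)$ contains a Sylow $p$-subgroup $P$ of $G$. The subgroup $\langle g\rangle$ is a $p$-subgroup, so by Sylow's theorem some conjugate of $\langle g\rangle$ lies in $P$. That conjugate of $g$ lies in $M_3(\ell)$.

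\textbf{Case 2: $4$ divides $d$.} Since $d$ divides $2k$, it follows that $2$ divides $k$, so $k$ is even. Then $M_3(2)$ exists, because we are not in the excluded situation $\ell=2$ with $k$ odd. Lemma \ref{lem.c3}(2) gives a Sylow $p$-subgroup of $G$ inside $M_3(2)$. Sylow's theorem again places a conjugate of $g$ in $M_3(2)$.

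\textbf{Case 3: $d=2$.} Here $p=3=d+1$, so hypothesis (a) of Lemma \ref{lem.everym3} holds. That lemma says the elements of order $3$ with $C_V(g)=0$ form a single class $\mc(3)$, and that $\mc(3)$ meets every $M_3(\ell)$. We take $\ell$ to be any prime divisor of $2k$ for which $M_3(\ell)$ exists; for example, $\ell=2$ when $k$ is even, or an odd prime divisor of $k$ when $k$ is odd. Such an $\ell$ exists because $k\geq 3$. Then $g\in\mc(3)$ is conjugate to an element of $M_3(\ell)$.

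Note that $d=1$ cannot occur, since $2^1-1=1$ has no Zsigmondy prime; equivalently, $d\geq 2$ as recorded earlier. So the three cases exhaust all possibilities.

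The only delicate point is Case 3. There I must check that $M_3(\ell)$ is actually defined for some $\ell$, and that Lemma \ref{lem.everym3} applies without needing Lemma \ref{lem.c3}. Both checks are immediate.
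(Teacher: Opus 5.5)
Your proposal is correct and matches the paper's proof. For $d>2$ you use Lemma~\ref{lem.c3} together with Sylow's theorem, splitting according to whether $d$ has an odd prime divisor or $4\mid d$, and for $d=2$, $p=3$, you use Lemma~\ref{lem.everym3}. Your extra checks, that $M_3(\ell)$ is actually defined in each case and that $d=1$ cannot occur, are details the paper leaves implicit.
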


\begin{proof}
As noted above, if $d>2$ then some $M_3(\ell)$ contains a Sylow $p$-subgroup of $G$ by Lemma \ref{lem.c3}.  If $d=2$, then $p=3$, and every $M_3(\ell)$ contains a conjugate of $g$ by Lemma \ref{lem.everym3}.
\end{proof}

\begin{lemma} \label{lem.invm3}
Assume $k$ is even.  If $u \in G$ is unipotent with Jordan type $(2,2,\ldots,2)$ and fixes no quadratic form of $-$ type then $M_3(\ell)$ contains a conjugate of $u$ whenever $\ell$ is odd.  Moreover, if $4$ divides $k$ then $M_3(2)$ contains a conjugate of $u$.
\end{lemma}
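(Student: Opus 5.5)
The plan is to build the required involution inside $\iota(H)\cong Sp_{2k/\ell}(2^\ell)\le M_3(\ell)$ by lifting a suitable involution from the smaller symplectic group over $\ff_{2^\ell}$. First I need an intrinsic description of the class in Lemma \ref{lem.bothtypes}(3). Let $u\in G$ be an involution of Jordan type $(2,\ldots,2)$, so that $[V,u]=C_V(u)$ has dimension $k$. Suppose $(v,vu)=0$ for all $v\in V$, and let $Q$ be any quadratic form fixed by $u$ and satisfying (\ref{irthform}). Then for every $v$,
$$Q(v+vu)=Q(v)+Q(vu)+(v,vu)=2Q(v)+0=0 .$$
Hence $[V,u]$ is a totally singular $k$-space for $Q$, so $Q$ has $+$ type. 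Therefore such a $u$ fixes no form of $-$ type. By Lemma \ref{lem.bothtypes}(3), it lies in the class described in the statement, since the other class fixes forms of both types. As that class is a single conjugacy class, it suffices to produce, inside $M_3(\ell)$, an involution $u$ of Jordan type $(2,\ldots,2)$ with $(v,vu)=0$ for all $v$.

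For the construction, write $m=k/\ell$. We have $2k/\ell=2m$ when $\ell$ is odd (using $\ell\mid k$), and $m=k/2$ when $\ell=2$. Let $B'$ be the alternating form on $\ff_{2^\ell}^{2m}$ preserved by $H$. I realize the $\ff_2$-form as $(v,w)=\mathrm{Tr}_{\ff_{2^\ell}/\ff_2}B'(v,w)$; this is the standard way $\iota(H)$ preserves a nondegenerate alternating form, and any two such forms are equivalent, so $\iota(H)$ is conjugate into $G$ compatibly.

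Choose a Lagrangian decomposition $\ff_{2^\ell}^{2m}=E\oplus F$ with dual bases $e_i,f_i$. Let $u'\in H$ fix each $e_i$ and send $f_i\mapsto f_i+e_{\sigma(i)}$, where $\sigma$ pairs the indices and so has no fixed points. This is the standard $a_m$-type involution, and the pairing requires $m$ even. One checks that $B'(v,vu')=0$ for all $v$, because the associated symmetric matrix is alternating. Then $u'$ has Jordan type $(2^m)$ over $\ff_{2^\ell}$. By Lemma \ref{lem.extensions}(2), $\iota(u')$ has Jordan type $(2^{m\ell})=(2^k)$ over $\ff_2$. Moreover $(v,v\iota(u'))=\mathrm{Tr}\,B'(v,vu')=0$. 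By the first paragraph, $\iota(u')\in\iota(H)\le M_3(\ell)$ is conjugate to $u$.

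It remains to check that $m$ is even in each case. For $\ell$ odd, $m=k/\ell$ is even because $k$ is even. For $\ell=2$, $m=k/2$ is even exactly when $4\mid k$, which explains that hypothesis. This parity condition matches Lemma \ref{lem.bothtypes}(4): in odd rank the $a$-type class does not exist.

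The main obstacle is justifying the identification of the ``no $-$ form'' class with the $a$-type involutions, specifically that $(v,vu)\equiv 0$ characterizes it. My argument above avoids the classification by showing directly that such $u$ fixes no $-$ form, and then invoking uniqueness from Lemma \ref{lem.bothtypes}(3). A secondary obstacle is verifying that the trace form is compatible with the embedding $\iota(H)\le G$.
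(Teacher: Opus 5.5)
Your argument is correct, and it takes a genuinely different and more elementary route than the paper's proof.

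The paper identifies the target class via Liebeck--Seitz as the one with $V\downarrow_u=W(2)^{k/2}$. It takes $u_\ell\in Sp_{2k/\ell}(2^\ell)$ of type $W(2)^{k/2\ell}$ and extends scalars to $\overline{\ff_2}$, where $\iota(u_\ell)$ splits as the direct sum of the Galois twists $\alpha(u_\ell)$. It then uses the Galois-stable identity $(v,u_\ell v)=0$ only to rule out a twist of type $W(2)^{k/2\ell-1}\oplus V(2)^2$, so that $\iota(u_\ell)$ is again of type $W(2)^{k/2}$.

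You use the same identity, but more directly. Your computation $Q(v+vu)=Q(v)+Q(vu)+(v,vu)=0$ shows that $[V,u]$ is a totally singular $k$-space for every $u$-invariant $Q$. So the identity itself certifies membership in the ``$+$ only'' class of Lemma~\ref{lem.bothtypes}(3). The trace form then carries the identity from $B'$ down to $\ff_2$, with no passage to $\overline{\ff_2}$ and no comparison of Galois conjugates.

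What each approach buys: from Liebeck--Seitz you need only the uniqueness of that class. You do not need to match the two classes with the module structures $W(2)^{k/2}$ and $W(2)^{k/2-1}\oplus V(2)^2$. The paper's version, in exchange, stays inside the module-theoretic framework it already uses for Lemma~\ref{lem.bothtypes}.

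The compatibility issue you flag at the end is harmless. Every nonzero $\iota(H)$-invariant bilinear form on $\ff_2^{2k}$ is $\mathrm{Tr}(\lambda B'(\cdot,\cdot))$ for some $\lambda\in\ff_{2^\ell}^{\times}$, because the $\ff_2H$-endomorphism ring of the module is $\ff_{2^\ell}$. Each of these forms vanishes on all pairs $(v,v\iota(u'))$, so your identity holds however $\iota(H)$ is placed inside $G$.
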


\begin{proof}
Using the notation from \cite[Chapter 6]{LiebeckSeitzUnipotentBook}, we have
$$
V\downarrow_u=W(2)^{k/2}.
$$
If $\ell$ is odd or if $k$ is divisible by $4$ and $\ell=2$ then $k/\ell$ is even and $Sp_{2k/\ell}(2^\ell)$ contains an element $u_\ell$ satisfying
$$
(\ff_{2^\ell}^{2k/\ell})\downarrow_{u_\ell}=W(2)^{k/2\ell}.
$$
By Lemma \ref{lem.extensions}(2), $\iota(u_\ell) \in G$ has Jordan type $(2,2,\ldots,2)$.  Moreover, as noted in the proof of Lemma \ref{lem.extensions}, over $\overline{\ff_2}$ we can write $\iota(u_\ell)$ as a direct sum of matrices of the form $\alpha(u_\ell)$, with $\alpha$ running through the Galois group ${\mathbb G}$ of the extension $\ff_2 \hookrightarrow \ff_{2^\ell}$.  It remains to show that each $\alpha(u_\ell)$ is conjugate with $u_\ell$ in $Sp_{2k/\ell}(2^\ell)$.   By \cite[Lemma 6.2 and Theorem 7.3]{LiebeckSeitzUnipotentBook}, if $\alpha(u_\ell)$ is not conjugate with $u_\ell$ then
$$
(\ff_{2^\ell}^{2k/\ell})\downarrow_{\alpha(u_\ell)}=W(2)^{k/2\ell-1} \oplus V(2)^2.
$$
By construction (see \cite[Section 6.1]{LiebeckSeitzUnipotentBook}), $(v,u_\ell v)=0$ for all $v \in \ff_{2^\ell}^{2k/\ell}$.  The same must hold for $\alpha(u_\ell)$, but does not hold for unipotent $x$ such that $(\ff_{2^\ell}^{2k/\ell}) \downarrow_x$ has $V(2)$ as a component. 
\end{proof}

\begin{cor} \label{cor.oddonly}
If $x,y \in G$ have prime order and generate $G$ invariably, then both $x$ and $y$ have odd order.
\end{cor}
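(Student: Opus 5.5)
By Corollary~\ref{cor.fpf}, if $x,y$ of prime order generate $G$ invariably and one of them has even order, then up to relabeling we are in case (2). So $k$ is even, because for odd $k$ Lemma~\ref{lem.bothtypes}(4) says every involution of Jordan type $(2,\ldots,2)$ fixes forms of both types. Moreover $x$ is an involution of Jordan type $(2,2,\ldots,2)$ fixing no form of $-$ type, and $y$ has odd prime order $p$ with $C_V(y)=0$. Since invariable generation is a property of conjugacy classes, it suffices to exhibit a single maximal subgroup $M_3(\ell)$ containing a conjugate of $x$ and a conjugate of $y$. Then $x^{g}$ and $y^{h}$ lie in a proper subgroup for suitable $g,h$, which is a contradiction.

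Let $d$ be such that $p$ is a Zsigmondy prime for $2^d-1$; as recorded before Lemma~\ref{lem.structure}, $d$ divides $2k$. I would split into cases on $d$.

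\textbf{$d$ divisible by an odd prime $\ell$.} Lemma~\ref{lem.c3}(1) puts a Sylow $p$-subgroup, hence a conjugate of $y$, inside $M_3(\ell)$. Since $k$ is even and $\ell$ is odd, Lemma~\ref{lem.invm3} puts a conjugate of $x$ in the same $M_3(\ell)$.

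\textbf{$d$ a power of $2$ with $4\mid d$.} Then $4 \mid d \mid 2k$, so $2\mid k$, and Lemma~\ref{lem.c3}(2) gives a conjugate of $y$ in $M_3(2)$. Lemma~\ref{lem.invm3} gives $x$ in $M_3(2)$ only when $4\mid k$. This is the delicate point. Here $d\ge 4$ is a power of $2$ dividing $2k$, so if $8\mid d$ then $4\mid k$ and we are done. If $d=4$, then $p=5=d+1$, so Lemma~\ref{lem.everym3}(a) says the class $\mc(5)$ meets every $M_3(\ell)$. If $k$ has an odd prime divisor $\ell$, we use that $M_3(\ell)$ together with Lemma~\ref{lem.invm3}. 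If $k$ is a power of $2$ with $k\ge 4$, then $4\mid k$ and $M_3(2)$ works.

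\textbf{$d=2$.} Then $p=3=d+1$, and Lemma~\ref{lem.everym3} gives a conjugate of $y$ in every $M_3(\ell)$. We again choose $\ell$ odd dividing $k$ if one exists, and otherwise $\ell=2$, which is valid because $k\ge 4$ is then a power of $2$.

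The only leftover case is $k=2$, which was excluded since $k\ge 3$. The main obstacle is the situation $d=4$ with $k\equiv 2 \pmod 4$: there $M_3(2)$ may fail to contain $x$, and we must instead rely on Lemma~\ref{lem.everym3} to move $y$ into an odd-$\ell$ subgroup $M_3(\ell)$. I expect that checking these subcases against the hypotheses of Lemmas~\ref{lem.c3}, \ref{lem.everym3} and~\ref{lem.invm3} completes the proof.
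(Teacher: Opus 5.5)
Your proof is correct. It agrees with the paper's except in the residual case $k\equiv 2 \pmod 4$, $d\in\{2,4\}$ (so $p=d+1$), where $M_3(2)$ need not contain a conjugate of the involution.

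There the paper leaves class $\mathcal{C}_3$. By Lemma~\ref{lem.structure}, $y$ stabilizes a nondegenerate $4$-dimensional subspace: the sum of two of the $V_j$ when $d=2$, or a single $V_j$ when $d=4$. The involution $u$ stabilizes one too, namely a summand $W(2)$ of $V\downarrow_u$. Since $G$ is transitive on such subspaces, both have conjugates in the stabilizer $Sp_4(2)\times Sp_{2k-4}(2)$ of a fixed one.

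You stay in $\mathcal{C}_3$ instead. You observe that $k\equiv 2\pmod 4$ with $k\ge 3$ forces an odd prime $\ell\mid k$. You then combine the full conclusion of Lemma~\ref{lem.everym3} (that $\mc(p)$ meets every $M_3(\ell)$, including those with $\ell\nmid d$) with Lemma~\ref{lem.invm3} for odd $\ell$.

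Your route is more uniform: it uses only the family $M_3(\ell)$ and shows the paper's separate residual argument is avoidable. The paper's residual step is more elementary. It needs only the orthogonal decompositions and Witt-type transitivity. It does not need the field-extension embedding of Lemma~\ref{lem.extensions}(1) or the uniqueness of $\mc(p)$ (via conjugacy in $GL_{2k}(2)$), which underlie the last part of Lemma~\ref{lem.everym3} that you rely on.
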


\begin{proof}
We must show that if $u \in G$ is an involution of Jordan type $(2,2,\ldots,2)$ that fixes no quadratic form of $-$ type, then there is no $y \in G$ of prime order that together with $u$ generates $G$ invariably.  By Corollary \ref{cor.fpf} and Sylow's Theorem, any such $y$ has odd order $p$, where $p$ is a Zsigmondy prime for some $2^d-1$ with $d$ dividing $2k$.  Moreover, $C_V(y)=0$.  Assume we are given such a $y$.  If $d$ is not a power of $2$, or if $4$ divides $k$, then some $M_3(\ell)$ contains a conjugate of $u$ and a conjugate of $y$ by Corollary \ref{cor.everyzsig} and Lemma \ref{lem.invm3}.  It remains to handle the cases where $k/2$ is odd and $d=2$ or $d=4$.  In these cases the only Zsigmondy prime for $2^d-1$ is $d+1$.  We see from the proof of Lemma \ref{lem.everym3} that $y$ fixes a nondegenerate $4$-dimensional subspace of $V$.  As $u$ also fixes such a subspace (consider one component $W(2)$ in $V\downarrow_u$), $u$ and $y$ cannot generate $G$ invariably.
\end{proof}

With Lemma \ref{lem.everym3} (along with the fact that there is no Zsigmondy prime for $2^6-1$) in mind, we make the following definitions.

\begin{definition}
Given a positive integer $d$, we write $\zs_2(d)$ for the set of all Zsigmondy primes for $2^d-1$, and define
$$
\ca_{\mathsf{even}}:=\left\{d \in 2\zz_{>0}:\zs_2(d)=\{d+1\}\right\},
$$
$$
\ca_{\mathsf{odd}}:=\left\{d \in (2\zz+1)_{>1}:\zs_2(d)=\{2d+1\}\right\},
$$
and
$$\ca:=\ca_{\mathsf{even}} \cup \ca_{\mathsf{odd}} \cup \{6\}.
$$
\end{definition}

So, $\ca$ consists of all $d$ such that $2^d-1$ admits at most one Zsigmondy prime, and any such Zsigmondy prime is the smallest odd number $\ell>1$ satisfying $\ell \equiv 1 \bmod d$.  Calculation yields the following result.

\begin{lemma} \label{lem.ca}
We have $\{2,3,4,6,10,12,18\} \subseteq \ca$.
\end{lemma}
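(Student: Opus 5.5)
The plan is a direct computation: for each $d$ in the list, factor $2^d-1$ and determine the Zsigmondy primes, meaning the primes dividing $2^d-1$ that divide no $2^a-1$ with $a<d$. Equivalently, these are the primes $r$ for which the multiplicative order of $2$ modulo $r$ is exactly $d$. It is convenient to strip off the algebraic factors: every Zsigmondy prime for $2^d-1$ divides the cyclotomic value $\Phi_d(2)$. So I will compute $\Phi_d(2)$, factor it, and discard any prime factor dividing $d$, since such a prime cannot be Zsigmondy. By definition of $\ca$, membership for even $d\neq 6$ amounts to $\zs_2(d)=\{d+1\}$, and for odd $d$ it amounts to $\zs_2(d)=\{2d+1\}$. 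The case $d=6$ is in $\ca$ by definition.

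The values are as follows.
\begin{itemize}
\item[$d=2$:] $\Phi_2(2)=3=d+1$.
\item[$d=3$:] $\Phi_3(2)=7=2d+1$.
\item[$d=4$:] $\Phi_4(2)=5=d+1$.
\item[$d=6$:] $\Phi_6(2)=3$, and $3$ already divides $2^2-1$, so $\zs_2(6)=\emptyset$. This is consistent with $6\in\ca$, which holds by definition anyway.
\item[$d=10$:] $\Phi_{10}(2)=2^4-2^3+2^2-2+1=11=d+1$.
\item[$d=12$:] $\Phi_{12}(2)=2^4-2^2+1=13=d+1$.
\item[$d=18$:] $\Phi_{18}(2)=2^6-2^3+1=57=3\cdot 19$. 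Here $3$ divides $2^2-1$, so it is not Zsigmondy. The prime $19=d+1$ is Zsigmondy, since $2$ has order $18$ modulo $19$.
\end{itemize}

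In each case the only candidate surviving is the prime $d+1$ (for even $d$) or $2d+1$ (for odd $d$). That candidate genuinely divides $2^d-1$ and does not divide $2^a-1$ for any proper divisor $a$ of $d$. This can be confirmed either by the factorizations above or by noting that the order of $2$ modulo that prime is exactly $d$.

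There is no real obstacle; the only care needed is with $d=18$. There one must notice that the factor $3$ of $\Phi_{18}(2)$ is not primitive, because it divides $d$ and hence divides $2^2-1$. Once $3$ is excluded, $19$ remains as the unique Zsigmondy prime.
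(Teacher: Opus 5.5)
Your proposal is correct and takes the same approach as the paper, which says only that ``calculation yields'' the lemma. Your cyclotomic values check out; for example, $\Phi_{18}(2)=57=3\cdot 19$, with $3$ excluded because it divides $2^2-1$ and $2$ having order $18$ modulo $19$. These computations supply the verification the paper omits.
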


We are ready to prove Theorem \ref{thm.main}.  By Corollaries \ref{cor.fpf} and \ref{cor.oddonly}, if $g,h \in G$ have respective prime orders $p,r$ and generate $G$ invariably, then there exist nontrivial divisors $c,d$ of $2k$ such that $p,r$ are respective Zsigmondy primes for $2^c-1$ and $2^d-1$.  Moreover, $C_V(g)=C_V(h)=0$.  If $\gcd(c,d)>2$ then there is some $\ell$ such that $M_3(\ell)$ contains both a conjugate of $g$ and a conjugate of $h$ by Lemma \ref{lem.c3}, which precludes invariable generation.  So, $\gcd(c,d) \leq 2$.

Assume now that $k=bp^a$ with $b \in \{1,2,3,6\}$, $p$ a prime, and $a \geq 1$.  Say $p$ is odd.  If $c,d$ are as above and $\gcd(c,d) \leq 2$, then one of $c,d$ is not divisible by $p$ and therefore divides $12$ and must lie in $\ca$.  Lemma \ref{lem.everym3} and Corollary \ref{cor.everyzsig} preclude invariable generation.  If $p=2$ then we may adjust $a$ if necessary so that $b \in \{1,3\}$.  One of $c,d$ is not divisible by $4$ and therefore lies in $\{2,3,6\}$.  Again we can apply Lemma \ref{lem.everym3} and Corollary \ref{cor.everyzsig}.

We are left with the case $k=45$.  We check that if $c,d$ are distinct nontrivial divisors of $90$, neither in $\ca$, and $\gcd(c,d) \leq 2$, then $\{c,d\}=\{5,9\}$.  By Lemma \ref{lem.structure}, if $p,r$ are respective Zsigmondy primes for $2^5-1$ and $2^9-1$ and $g,h \in G$ have respective orders $p,r$ with $C_V(g)=C_V(h)=0$, then there are totally singular subspaces $X,Y \leq V$ both of dimension $45$ from $V$, such that $g$ and $h$ stabilize $X$ and $Y$, respectively.  (When $d=5$, consider the subspace $\perp_{i=1}^9 W_i$ from Lemma \ref{lem.structure}(2), and reason similarly when $c=9$.)  Therefore $g$ and $h$  cannot generate $G$ invariably.

\section*{Acknowledgment}
Claude was used to proofread text and check mathematical arguments.


\begin{thebibliography}{}

\bibitem{AG} M. Aschbacher and R. Guralnick, Some Applications of the First Cohomology Group, {\it J. Algebra} {\bf 90} (1984), no. 2, 446-460.


\bibitem{atlas} J. H. Conway, R. T. Curtis, S. P. Norton, R. A. Parker and R. A. Wilson, {\it Atlas of Finite Groups}, Clarendon Press, Oxford, 1985.

\bibitem{DLO} G. De Franceschi, M. W. Liebeck, and E. A. O'Brien, {\it Conjugacy in Finite Classical Groups}, Springer, Cham, Switzerland, 2025.

\bibitem{DGHP} S. Dolfi, R. M. Guralnick, M. Herzog, and C. Praeger, A new solvability criterion for finite groups,
{\it J. Lond. Math. Soc.} (2) {\bf 85} (2012), no. 2, 269-281.

\bibitem{Dye} R. H. Dye, Interrelations of symplectic and orthogonal groups in characteristic two, {\it J. Algebra} {\bf 59} (1979), 202-221.

\bibitem{GM} R. M.  Guralnick and G. Malle, Simple groups admit Beauville structures, {\it J. London Math. Soc.} {\bf 85} (2012), 694-721. 

\bibitem{KLS} W. M. Kantor, A. Lubotzky, and A. Shalev, Invariable generation and the Chebotarev invariant of a finite group, {\it J. Algebra} {\bf 348}  (2011) 302-314.

\bibitem{King} C. S. H. King, Generation of finite simple groups by an involution and an element of prime order, {\it J. Algebra} {\bf 478} (2017), 153-173.

\bibitem{KleidmanLiebeck} P. Kleidman and M. Liebeck, {\it The Subgroup Structure of the Finite Classical Groups}, London Mathematical Society Lecture Note Series {\bf 129}, Cambridge University Press, 1990.

\bibitem{LN} R.Lidl and H. Niederreiter, {\it Finite Fields} (foreword by P. M. Cohn), Cambridge University Press, Cambridge, 1997.

\bibitem{LiebeckSeitzUnipotentBook} M. W. Liebeck and G. M. Seitz, {\it Unipotent and Nilpotent Classes in Simple Algebraic Groups and Lie Algebras}, Mathematical Surveys and Monographs {\bf 180}, American Mathematical Society, Providence, 2012. 

\bibitem{Miller} G. A. Miller, On the groups generated by two operators, {\it Bull. Amer. Math. Soc.} {\bf 7} (1901), no. 10, 424-426.

\bibitem{SW} J. Shareshian and R. Woodroofe, Divisibility of binomial coefficients and generation of alternating groups, {\it Pacific J. Math.} {\bf 292} (2018), no. 1, 223-238.

\bibitem{St} R. Steinberg, Generators for simple groups, {\it Canadian J. Math.} {\bf 14} (1962), 277-283. 

\end{thebibliography}
\end{document}